\documentclass[11pt]{amsart}
\usepackage{amsmath}
\usepackage{amssymb}
\newtheorem{theorem}{Theorem}

\newtheorem{lemma}[theorem]{Lemma}

\newcommand{\C}{{\mathcal C}}

\newcommand{\HH}{{\mathcal H}}
\newcommand{\OO}{{\mathcal O}}
\newcommand{\D}{{\mathbb{D}}}
\newcommand{\G}{{\mathbb G}}
\newcommand{\Om}{\Omega}
\newcommand{\eps}{\varepsilon}
\title{Separate continuity of the Lempert function of the spectral
ball}

\author{Nikolai Nikolov, Pascal J. Thomas}

\address{Institute of Mathematics and Informatics\\ Bulgarian Academy
of Sciences\\ Acad. G. Bonchev 8, 1113 Sofia,
Bulgaria}\email{nik@math.bas.bg}

\address{Universit\'e de Toulouse\\ UPS, INSA, UT1, UTM \\
Institut de Math\'ematiques de Toulouse\\
F-31062 Toulouse, France} \email{pthomas@math.univ-toulouse.fr}

\subjclass[2000]{Primary: 32F45; Secondary: 32A07.}

\keywords{Lempert function, spectral ball, symmetrized polydisc}

\begin{document}

\begin{thanks}{This note was written during the stay of the second
named author at the Institute of Mathematics and Informatics of
the Bulgarian Academy of Sciences supported by a CNRS grant
(Sempetber 2009).}
\end{thanks}

\begin{abstract} We find all matrices $A$ from the spectral unit ball
$\Omega_n$ such that the Lempert function $l_{\Omega_n}(A,\cdot)$
is continuous.
\end{abstract}

\maketitle

The characteristic polynomial of a $n\times n$ complex matrix $A$ is
$$
P_A(t):= \det(tI_n-A)=: t^n+\sum_{j=1}^n(-1)^j\sigma_j (A)t^{n-j},
$$
where $I_n$ is the unit matrix. Let
$r(A):=\max\{|\lambda|:P_A(\lambda)=0\}$ be the spectral radius of
$A$. The spectral unit ball is the pseudoconvex domain
$\Omega_n:=\{A:r(A)<1\}.$

Let
$\sigma (A):=(\sigma_1(A),\dots,\sigma_n(A))$.
The \emph{symmetrized polydisk} is the bounded domain
$\mathbb G_n:=\sigma(\Omega_n)\subset \mathbb C^n$,
which is hyperconvex (see \cite{Edi-Zwo}) and hence taut.

We are interested in two-point Nevanlinna--Pick problems with values
in the spectral unit ball, so let us consider
the Lempert function of a domain $D\subset\mathbb C^m$ : for $z,w\in D$,
$$
l_D(z,w):=\inf\{|\alpha|:\exists\varphi\in\mathcal O(\mathbb D,D):
\varphi(0)=z,\varphi(\alpha)=w\},
$$
where $\mathbb D\subset\mathbb C$ is the unit disc.
For general facts about this function, see for instance \cite{Jar-Pfl}. The Lempert
function is symmetric in its arguments, upper semicontinuous and decreases under
holomorphic maps, so for $A,B\in\Omega_n,$
\begin{equation}
\label{decr}
l_{\Omega_n}(A,B) \ge l_{\mathbb{G}_n}(\sigma(A),\sigma(B)).
\end{equation}
The domain $\mathbb G_n$ is taut, so its Lempert function is continuous.

The systematic study of the relationship between Nevanlinna--Pick
problems valued in the symmetrized polydisk or spectral ball began
with \cite{Agl-You}. In particular, it showed that when both $A$
and $B$ are \emph{cyclic} (or non-derogatory) matrices, i.e. they
admit a cyclic vector (see other equivalent properties in
\cite{NTZ}), then equality holds in \eqref{decr}. It follows that
$l_{\Omega_n}$ is continuous on $\mathcal C_n \times \mathcal
C_n$, where $\mathcal C_n$ denotes the (open) set of cyclic
matrices. On the other hand, in general, if equality holds in
\eqref{decr} at $(A,B)$, then $l_{\Omega_n}$ is continuous at
$(A,B)$ (see \cite[Proposition 1.2]{Tho-Tra}). The converse is
also true, since $l_{\Omega_n}$ is an upper semicontinuous
function, $l_{\Bbb G_n}$ is a continuous function and (\ref{decr}) holds.

The goal of this note is to study the continuity of $l_{\Omega_n}$
separately with respect to each argument. In \cite{Tho-Tra}, the authors
looked for matrices $B$ such that $l_{\Omega_n}(A,.)$ is continuous
at $B$ for any $A$. They conjecture that this holds for any $B \in \mathcal C_n$,
and prove it for $n\le 3$ \cite[Proposition 1.4]{Tho-Tra}, and the converse
statement for all dimensions (see \cite[Theorem 1.3]{Tho-Tra}).

In the present paper, we ask for which $A$ the function $l_{\Omega_n}(A,.)$
is continuous at $B$ for any $B$ (or simply, continuous on the whole $\Omega_n$).
By \cite[Proposition 4]{NTZ}, for any matrix $A\in\C_n$ with at least two
different eigenvalues, the function $l_{\Om_n}(A,\cdot)$ is not
continuous at any scalar matrix. On the other hand,
$l_{\Om_n}(0,B)=r(B)$ and hence $l_{\Om_n}(A,\cdot)$ is a continuous function for any scalar matrix $A$
(since the automorphism $\Phi_{\lambda}(X)=(X-\lambda I)(I-\overline{\lambda}X)^{-1}$
of $\Omega_n$ maps $\lambda I_n$ to $0$, where $\lambda\in\mathbb D$).

We have already mentioned that if $A\in\Om_n$ ($n\ge 2$), then the following conditions are equivalent:

(i) the function $l_{\Om_n}$ is continuous at $(A,B)$ for any $B\in\Om_n;$

(ii) $l_{\Omega_n}(A,\cdot)=l_{\mathbb{G}_n}(\sigma(A),\sigma(\cdot)).$

\noindent Consider also the condition

(iii) $A\in\mathcal C_2$ has two equal eigenvalues.

\noindent By \cite[Theorem 8]{Cos}, (iii) implies (ii). Theorem 1 below says that the
scalar matrices and the matrices satisfying (iii) are the only cases when $l_{\Om_n}(A,\cdot)$
is a continuous function. Then the mentioned above result \cite[Proposition 4]{NTZ} shows that
(i) implies (iii) and hence the conditions (i), (ii) and (iii) are equivalent.

\begin{theorem}\label{1} If $A\in\Om_n,$ then $l_{\Om_n}(A,\cdot)$
is a continuous function if and only if either $A$ is scalar or
$A\in\mathcal C_2$ has two equal eigenvalues.
\end{theorem}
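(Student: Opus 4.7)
The plan for the sufficient direction is to assemble facts already available. If $A=\lambda I$ is scalar, the automorphism $\Phi_\lambda$ of $\Om_n$ carries $A$ to $0$ and descends via $\sigma$ to an automorphism of $\G_n$, so both $l_{\Om_n}(A,\cdot)$ and $l_{\G_n}(\sigma(A),\sigma(\cdot))$ reduce to $r(\Phi_\lambda(\cdot))$, a continuous function of the second argument. If $A\in\C_2$ has two equal eigenvalues, Costara's result \cite[Theorem 8]{Cos} supplies condition (ii) at once, whence $l_{\Om_n}(A,\cdot)=l_{\G_2}(\sigma(A),\sigma(\cdot))$ is continuous.

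For the necessary direction we will assume $A$ is neither scalar nor of type (iii) and exhibit a single $B\in\Om_n$ with strict inequality in \eqref{decr}; by the pointwise equivalence ``continuity of $l_{\Om_n}$ at $(A,B)$ iff equality in \eqref{decr} at $(A,B)$'' recalled in the introduction, this suffices. We split by the Jordan structure of $A$. First, if $A\in\C_n$ has at least two distinct eigenvalues, \cite[Proposition 4]{NTZ} directly provides such a $B$ (a suitable scalar matrix). Second, if $A\in\C_n$ but all its eigenvalues coincide, then $A$ is non-scalar and $n\ge 3$; composing with an appropriate $\Phi_\lambda$ we may take $A=J_n$, the nilpotent Jordan block of size $n$. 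Third, if $A\notin\C_n$ and $A$ is non-scalar, $A$ has an eigenvalue of geometric multiplicity $\ge 2$, which again forces $n\ge 3$.

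For the latter two cases we will construct an explicit witness $B$, which must be non-cyclic when $A\in\C_n$: Agler--Young gives equality in \eqref{decr} whenever both arguments are cyclic, so no cyclic $B$ works against cyclic $A$. The plan is to choose $B$ non-cyclic with a Jordan type ``conflicting'' with that of $A$, in such a way that every holomorphic disc $\varphi:\D\to\Om_n$ with $\varphi(0)=A$ and $\varphi(\alpha)=B$ is forced to have $|\alpha|$ strictly larger than $l_{\G_n}(\sigma(A),\sigma(B))$; the latter is routinely bounded above by the rescaled lifted disc $t\mapsto\sigma(t\lambda_1/r,\dots,t\lambda_n/r)$ in $\G_n$.

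The main obstacle is to produce the sharp lower bound $l_{\Om_n}(A,B)>l_{\G_n}(\sigma(A),\sigma(B))$ for the constructed $B$. A Schwarz--Pick estimate on the eigenvalue branches of $\varphi(t)$ only reproduces the standard inequality $l_{\Om_n}\ge l_{\G_n}$, so we must extract further information from the matrix-valued character of $\varphi$. Concretely, the plan is to analyse the first- or second-order jets of $\varphi$ at $0$ and at $\alpha$, imposing the matrix identities $\varphi(0)=A$ and $\varphi(\alpha)=B$ simultaneously to derive a strict lower bound on $|\alpha|$. Alternatively, a reduction along an invariant subspace of $A$ (or $B$) should bring the cyclic-equal-eigenvalue and non-cyclic cases back to the already-treated case of distinct eigenvalues in $\C_n$. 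This jet-level analysis or the subspace reduction is the step we expect to be the most delicate.
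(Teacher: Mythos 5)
Your reduction of the necessary direction contains a logical gap. You propose to exhibit a single $B$ with strict inequality in \eqref{decr} and invoke the equivalence ``continuity of $l_{\Om_n}$ at $(A,B)$ iff equality in \eqref{decr}''. But that equivalence concerns \emph{joint} continuity of the two-variable function at $(A,B)$, whereas the theorem is about continuity of the partial function $l_{\Om_n}(A,\cdot)$; joint discontinuity at $(A,B)$ does not imply discontinuity of the restriction to $\{A\}\times\Om_n$. The scalar case itself is a counterexample: $l_{\Om_2}(0,\cdot)=r(\cdot)$ is continuous, yet for $B$ with eigenvalues $\pm\mu$ the disc $\zeta\mapsto(0,\zeta)$ gives $l_{\G_2}(0,\sigma(B))\le\mu^2<\mu=l_{\Om_2}(0,B)$, so \eqref{decr} is strict and $l_{\Om_2}$ is jointly discontinuous at $(0,B)$. (Your sufficiency paragraph makes the same conflation when it asserts that $l_{\G_n}(\sigma(A),\sigma(\cdot))$ also reduces to $r(\Phi_\lambda(\cdot))$; only the statement about $l_{\Om_n}$ is true, and only that one is needed.) To get separate discontinuity you must produce $B_j\to B$ with $\limsup_j l_{\Om_n}(A,B_j)<l_{\Om_n}(A,B)$. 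For cyclic $A$ this follows from strict inequality by taking $B_j$ cyclic (Agler--Young), but for your case of $A\notin\C_n$ no such identification is available. The paper handles non-nilpotent $A$ (cyclic or not) quite differently: after normalizing one eigenvalue to $0$, it decomposes $A$ into Jordan blocks, applies \cite[Proposition 4]{NTZ} blockwise to build $A_j\to 0$ with $l_{\Om_n}(A,A_j)\le m<r(A)=l_{\Om_n}(A,0)$, i.e.\ it shows discontinuity at the zero matrix directly. Your case split also sends non-cyclic $A$ with several distinct eigenvalues into the ``explicit nilpotent-type witness'' bucket, where the reduction to $J_n$ you describe does not apply.

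The second, and larger, problem is that the heart of the theorem --- the case of nilpotent $A\ne 0$ with $n\ge 3$ (and more generally derogatory $A$) --- is precisely the step you defer as ``the most delicate''. The paper's argument here is not a generic jet computation: it uses the formula $l_{\Om_n}(A,C)=h_{\G_n}(0,\sigma(C))$ of \cite{Tho-Tra} for cyclic $C$, where the competing discs $\psi$ must satisfy vanishing-order constraints $\mathrm{ord}_0\psi_j\ge d_j$ encoding the Jordan structure of $A$; it chooses $B$ \emph{diagonal} with characteristic polynomial $z^{n-k}\bigl(z^k-k\lambda^{k-1}z+(k-1)\lambda^k\bigr)$, so that $\lambda$ is a double eigenvalue; it shows that the double eigenvalue forces any limiting extremal $\psi$ to satisfy $\sum_j\psi_j'(\alpha)(-\lambda)^{n-j}=0$ (Lemma~\ref{sumder}) while $\alpha\ll\lambda$ (Lemma~\ref{alphsm}); and it derives a contradiction from the resulting asymptotics as $\lambda\to 0$. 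The choice of the exponent $k$ minimizing $d_j/(j-1)$ is what makes the two estimates collide. Without an argument at this level of precision, the ``only if'' direction remains unproven for every nilpotent and every derogatory $A$, so the proposal establishes only the sufficiency direction and the case of cyclic $A$ with two distinct eigenvalues.
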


\begin{proof} Using $\Phi_\lambda$ and an automorphisms of $\Omega_n$ of the form
$X\to P^{-1}XP,$ where $P$ is an invertible matrix, we may assume that $0$ is an
eigenvalue of $A$ and the matrix is in a Jordan form.

It is enough to prove that $l_{\Om_n}(A,\cdot)$ is not a
continuous function if $A$ has at least one non-zero eigenvalue or
$A\in\Om_n$ is a non-zero nilpotent matrix and $n\ge 3$.

In the first case, let $d_1\ge\dots\ge d_k$ be the numbers of the Jordan
blocks corresponding to the pairwise different eigenvalues
$\lambda_1=0,\lambda_2,\dots,\lambda_k.$ We
shall prove that $l_{\Om_n}(A,\cdot)$ is not continuous at $0.$ It
is easy to see that $A$ can be represented as blocks $A_1,\dots
A_l$ (with sizes $n_1,\dots,n_l$) such that the eigenvalues of
$A_1$ are equal to zero and the other blocks are cyclic with at
leat two different eigenvalues values ($A_1$ is missed if
$d_1=d_2$). By \cite[Proposition 4]{NTZ}, we know that there are $(A_{i,j})_j\to
0,$ $1\le i\le l,$ such that
$\sup_{i,j}l_{\Om_{n_i}}(A_i,A_{i,j}):=m<r(A).$ Taking $A_j$ to be
with blocks $A_{1,j},\dots,A_{l,j},$ it is easy to see
$l_{\Om_n}(A,A_j)\le\max_i l_{\Om_{n_i}}l(A_i,A_{i,j})\le
m<l_{\Om_n}(A,0)$ which implies that $l_{\Om_n}(A,\cdot)$ is not
continuous at $0.$

Let now $A\neq 0$ be a nilpotent matrix. Then
$A= (a_{ij})_{1\le i,j\le n}$ with $a_{ij}=0$
unless $j=i+1$. Let $r=\mbox{rank}(A)\ge 1$.
Following the proof
of Proposition 4.1 in \cite{Tho-Tra}, let
$$
F_0:=\{1\} \cup \left\{ j \in \{2,\dots,n\} : a_{j-1,j}=0 \right\}
:=\{ 1=b_1 < b_2 < \dots < b_{n-r} \},
$$
and $b_{n-r+1}:=n+1$. We set
 $d_i := 1+ \# \left( F_0 \cap \{(n-i+2),\dots,n\}\right) $.
The hypotheses on $A$ imply that we can choose its Jordan form
so that $a_{n-1,n}=1$, so
$1=d_1=d_2\le d_3\le\dots\le d_n= \#F_0 = n-r$, $d_{j+1}\le d_j+1$.

Corollary 4.3 and Proposition 4.1 in \cite{Tho-Tra} show
that for any $C\in\C_n$,
$$l_{\Om_n}(A,C)=h_{\G_n}(0,\sigma(C)):=\inf\{|\alpha|:\exists
\psi\in\HH(\D,\G_n):\psi(\alpha)=\sigma(C)\},$$ where
$$\HH(\D,\G_n)=\{\psi\in\OO(\D,\G_n):\mbox{ord}_0\psi_j\ge d_j,
\ 1\le j\le n\}.$$

Note that $d_j\le j-1$ for $j\ge 2.$ Let $m:=\min_{j\ge
2}\frac{d_j}{j-1}$ and choose a $k$ such that $\frac{d_k}{k-1}=m$.
If $m=1$, then ${d_j}= {j-1}$ for all $j\ge 2$,
and if furthermore $n\ge 3$, we can take $k=3$.

With $k$ chosen as above, let $\lambda$ be a small positive number,
$b=k\lambda^{k-1}$ and
$c=(k-1)\lambda^k.$ Then $\lambda$ is a double zero of the
polynomial $\Lambda(z) = z^{n-k}(z^k-bz+c)$ with zeros in $\mathbb D.$
Let $B$ be a diagonal matrix such that its characteristic polynomial is
$P_B(z)=\Lambda(z)$.

Assuming that $l_{\Om_n}(A,\cdot)$ is continuous at $B,$ then
$$l_{\Om_n}(A,B)=h_{\G_n}(0,\sigma(B))=:\alpha.$$

\begin{lemma}
\label{sumder}
If $l_{\Om_n}(A,B)=\alpha$, then there
is a $\psi\in\HH(\D,\G_n)$ with $\psi(\alpha)=\sigma(B)$ and
$$\sum_{j=1}^n\psi'_j(\alpha)(-\lambda)^{n-j}=0.$$
\end{lemma}

\begin{proof}
This is analogous to the proof of Proposition 4.1 in \cite{Tho-Tra}.
Let $\varphi \in \OO (\D, \Om_n)$ be such that $\varphi(0)=A$
and $\varphi(\tilde \alpha)=B$. Corollary 4.3  in \cite{Tho-Tra} applied to $A$
shows that $\tilde\psi := \sigma \circ \varphi \in \HH(\D,\G_n)$.

Now we study $\sigma_n(\varphi(\zeta))-\sigma_n(B)=\sigma_n(\varphi(\zeta))$
near $\zeta =\alpha$. We may assume that the first two diagonal coefficients of $B$
are equal to $\lambda$.  If we let $\varphi_\lambda (\zeta):=\varphi(\zeta)-\lambda I_n$,
then the first two columns of $\varphi_\lambda (\alpha)$ vanish, so
$\sigma_n \circ \varphi_\lambda = \det (\varphi_\lambda )$ vanishes to
order $2$ at $\alpha$.  On the other hand,
$$
\det (-\varphi_\lambda(\zeta) )=\det (\lambda I_n - \varphi(\zeta) )
= \lambda^n + \sum_{j=1}^n (-1)^j\lambda^{n-j} \tilde\psi_j(\zeta) ,
$$
and since the derivative of the left hand side vanishes at $\tilde\alpha$,
the same holds for the right hand side. It remains to let $\tilde\alpha\to\alpha$
and to use that $\Bbb G_n$ is a taut domain, providing the desired $\psi.$
\end{proof}

\begin{lemma}
\label{alphsm}
We have $\alpha^m\lesssim\lambda$; furthermore if $m=1$ and $n\ge 3$, then
$\alpha^{2/3}\lesssim\lambda$. So in all cases $\alpha\ll\lambda$.
\end{lemma}

\begin{proof}
Note that there is an $\eps>0$ such that for $\lambda<\eps$ the map
$\zeta\to(0,\dots,0,k(\eps\zeta)^{d_k},(k-1)\lambda(\eps\zeta)^{d_k},0,\dots,0)$
is a competitor for $h_{\Om_n}(A,B).$ So $(\eps\alpha)^{d_k}\le\lambda^{k-1}$,
that is, $\alpha^m\lesssim\lambda$.

If $m=1$ and $n\ge k=3,$ then considering the map
$\zeta\to(0,3\lambda^{1/2}\eps\zeta,2(\eps\zeta)^2,\\0\dots,0)$ we see that
$(\eps\alpha)^2\le\lambda^3$.
\end{proof}

Setting $\psi_j(\zeta)=\zeta^{d_j}\theta(\zeta),$
the condition in Lemma \ref{sumder} becomes
\begin{equation}
\label{condS}
a\frac{(-\lambda)^n}{\alpha}+S=0,
\end{equation}
where $a=(k-1)d_k-kd_{k-1}$ and
$S=\sum_{j=1}^n\alpha^{d_j}\theta'_j(\alpha)(-\lambda)^{n-j}.$
Note that $a\neq 0.$ Indeed, if $m<1,$ then $d_k=d_{k-1}$ and
hence $a=-d_k;$ if $m=1$, then $a=(k-1)(k-1) -k (k-2) = 1.$
Since $\mathbb G_n$ is bounded, $|\theta'_j(\alpha)|\lesssim 1.$

By Lemma \ref{alphsm} and the choice of $k$, for any $j$,
$$
\alpha^{d_j}\lesssim \lambda^{(k-1)d_j/d_k}  \le \lambda^{j-1}
\le\lambda^{n-1}.$$
Thus
$S\lesssim\lambda^{n-1}.$ By Lemma \ref{alphsm} again, $\alpha\ll\lambda,$
a contradiction with \eqref{condS}.
\end{proof}

\end{document}